\documentclass{article}[12pt] 

\usepackage{amsmath}
\usepackage{amsthm}
\usepackage{amsfonts}
\usepackage{mathrsfs}
\usepackage{stmaryrd}
\usepackage{setspace}
\usepackage{fullpage}
\usepackage{amssymb}
\usepackage{breqn}
\usepackage{enumitem}
\usepackage{bbold} 
\usepackage{authblk}
\usepackage{comment}
\usepackage{hyperref}
\usepackage{pgf,tikz}
\usepackage{graphicx}
\usepackage{subcaption}

\newtheorem{thm}{Theorem}[section]

\newtheorem{conjecture}[thm]{Conjecture}
\newtheorem{proposition}[thm]{Proposition}

\newtheorem*{theorem*}{Theorem}
\newtheorem*{prop*}{Proposition}

\newcommand\cF{{\mathcal F}}
\newcommand\cG{{\mathcal G}}
\newcommand\cH{{\mathcal H}}

\usepackage{lineno}

\newcommand{\ignore}[1]{}

\title{A note on hyperseparating set systems}
\author{D\'aniel Gerbner\footnote{HUN-REN Alfr\'ed R\'enyi Institute of Mathematics, E-mail: \texttt{gerbner@renyi.hu.}}}
\date{}

\begin{document}

\maketitle

\begin{abstract}
We say that a set system $\cF$ is $k$-completely hyperseparating if for any vertex $v$, there are at most $k$ sets in $\cF$ with intersection $\{v\}$. We determine the minimum size of such set systems on an $n$-element underlying set, generalizing a very recent result for $k=2$ by Bat\'ikov\'a, Kepka, and Nem\u{e}c.

We say that $\cF$ is $k$-hyperseparating if for any vertex $v$, there are at most $k$ sets in $\cF$ such that no other vertex is contained by exactly the same sets out of these $k$ sets. We determine the minimum size of $2$-hyperseparating set systems on an $n$-element underlying set.
\end{abstract}

\section{Introduction}

A set system $\cF$ on an underlying set $V$ is called \textit{separating} if for any elements $v\neq v'$ of $V$, there is a set in $\cF$ that contains exactly one of them. This is a central object in the topic Search Theory/Group Testing (see \cite{DH} for a monograph), which seeks to find one (or more) hidden elements (usually called \textit{defective}) in $V$ by asking queries of the following form for some $F\subset V$: ``Is the defective element contained in $F$''? Asking all the sets in a family $\cF$ always finds the defective element if and only if $\cF$ is separating. Therefore, the goal is to minimize the size of separating set systems. 

It is well-known and easy to see that if $|V|=n$, then the smallest size of a separating set system is $\lceil\log_2 n\rceil$. Indeed, a simple construction is obtained by mapping the elements to $\{1,\dots, n\}$ and taking the sets $V_i:=\{j\le n: \text{the $i$th digit of $j$ is 1}\}$. On the other hand, if we list the sets that contain a given $v\in V$, we obtain a different list for different elements. There are $2^{|\cF|}$ possible lists, thus $2^{|\cF|}\ge n$.

A natural variant is the so-called \textit{completely separating set system}, where for any elements $v\neq v'$ of $V$, there is a set in $\cF$ that contains $v$ but not $v'$ (note that there is also a set that contains $v'$ but not $v$ by exchanging the role of $v$ and $v'$). This notion was introduced by Dickson \cite{dickson}. Spencer \cite{spencer} proved that the smallest size of a separating set system is $\min\{m: \binom{m}{\lfloor m/2\rfloor}\ge n\}$. Note that completely separating set systems did not originally have a motivation from Search Theory, but later they also appeared in that area \cite{gv,gknpw}.

Very recently, Bat\'ikov\'a, Kepka and Nem\u{e}c \cite{bkn} introduced \textit{hypercompletely separating systems}. They are set systems with the property that for every element $v$ of the underlying set $V$, there are sets $A,B$ in the set system with $A\cap B=\{v\}$. Note that $A$ and $B$ are not necessarily distinct. Bat\'ikov\'a, Kepka and Nem\u{e}c \cite{bkn} determined the minimum size of a hypercompletely separating system on an $n$-element underlying set.

In this note, we consider two generalizations. We say that a set system $\cF\subset 2^{V}$ is \textit{$k$-hypercompletely separating} if for any element $v\in V$, there are (not necessarily distinct) sets $A_1,\dots, A_k\in \cF$ such that $A_1\cap A_2\cap\dots\cap A_k=\{v\}$. Note that the case $k=2$ gives back the hypercompletely separating systems from \cite{bkn}.

One can rephrase the definition of completely separating set systems in the following way. For any element $v\in V$, the intersection of sets containing $v$ is $v$. In other words, there are some sets in $\cF$ such that their intersection is $v$. Here, we assume that there are $k$ or fewer sets in $\cF$ such that their intersection is $v$.

We determine the smallest size of a $k$-hypercompletely separating set system for every $k$. Let $k'=k'(m)$ be $k$ if $m\ge 2k-1$ and $\lfloor m/2\rfloor$ if $m\le 2k-1$.

\begin{proposition}\label{prop1}
    The smallest size of a $k$-hypercompletely separating set system on an $n$-element underlying set is $\min \{m:\binom{m}{k'}\ge n\}$.
\end{proposition}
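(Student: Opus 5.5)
Both bounds will be proved separately, with $m=|\cF|$ and $k'=k'(m)$. The key observation is that $\binom{m}{k'}$ is the maximum of $\binom{m}{j}$ over all $j\le k$. If $m\ge 2k-1$, then $k\le \lceil m/2\rceil$ and $\binom{m}{j}$ is increasing for $j\le k$. Otherwise $\lfloor m/2\rfloor\le k$, so the central binomial coefficient is attained.

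\emph{Construction.} Let $m$ be minimal with $\binom{m}{k'}\ge n$. Take $m$ sets $F_1,\dots,F_m$ and injectively assign to each $v\in V$ a $k'$-subset $S_v\subseteq[m]$. Set $v\in F_i$ if and only if $i\notin S_v$. Then $v$ lies in exactly the $m-k'$ sets indexed by $[m]\setminus S_v$. Fix $v$ and $u\neq v$. Since $S_u$ and $S_v$ are distinct sets of the same size, there is some $i\in S_u\setminus S_v$, and $F_i$ contains $v$ but not $u$. So the intersection of the sets containing $v$ is $\{v\}$.

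It remains to show that $k$ of these sets already suffice, so that $m-k'\le k$ is not needed. Consider the sets $F_i$ with $i\notin S_v$. An element $u$ lies in all of them if and only if $S_u\subseteq S_v$, which forces $u=v$ since the sizes are equal. So we need the number of these sets to be at most $k$, repeating a set if there are fewer. This requires $m-k'\le k$, which fails for large $m$.

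So the naive complement construction is wrong, and I would swap it for the direct one: $v\in F_i$ if and only if $i\in S_v$. Then $\bigcap_{i\in S_v}F_i=\{u: S_v\subseteq S_u\}=\{v\}$, using $k'\le k$ sets, padded by repetition if $k'<k$. This gives the upper bound. One also needs every $v$ to lie in some set, which holds because $k'\ge 1$ once $n\ge 2$. The case $n=1$ should be handled separately.

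\emph{Lower bound.} For each $v$, fix sets $A_1,\dots,A_k$ with intersection $\{v\}$. Let $T_v\subseteq[m]$ be their index set, so $1\le|T_v|\le k$. Let $I(v)$ denote the set of indices of members of $\cF$ containing $v$. Then $T_v\subseteq I(v)$. Moreover, for $u\neq v$ we have $T_v\not\subseteq I(u)$, and in particular $T_v\not\subseteq T_u$. Hence $\{T_v\}$ is an antichain in $2^{[m]}$ with all members of size at most $k$, and distinct elements give distinct sets.

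The main step, and essentially the only nontrivial one, is bounding the size of such an antichain by $\max_{j\le k}\binom{m}{j}=\binom{m}{k'}$. This follows from the LYM inequality $\sum_v 1/\binom{m}{|T_v|}\le 1$: each term is at least $1/\binom{m}{k'}$ by the monotonicity observation above. Thus $n\le\binom{m}{k'(m)}$. Since $\binom{m}{k'(m)}$ is nondecreasing in $m$, this gives $m\ge\min\{m:\binom{m}{k'}\ge n\}$.
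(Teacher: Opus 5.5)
Your proof is correct, and its skeleton matches the paper's. Your index sets $T_v$ are exactly the separators $S$ that the paper chooses inside the dual sets $F_v=I(v)$. Your construction is the dual of $n$ distinct $k'$-subsets of $[m]$, as in the paper; the abandoned complement construction can simply be deleted.

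The differences are in how you obtain the antichain and how you count it:
\begin{itemize}
\item The paper replaces each dual set by its separator one at a time and then argues that the resulting family is Sperner. You get the same antichain in one step from $T_v\not\subseteq I(u)\supseteq T_u$.
\item For the counting, the paper splits into two ranges. For $m\le 2k-1$ it invokes Sperner's theorem. For $m\ge 2k-1$ it runs a shifting argument that replaces the lowest layer of a maximal family by its upper shadow, which needs $\ell+1<m-\ell$. You instead apply the LYM inequality once, together with $\max_{j\le k}\binom{m}{j}=\binom{m}{k'(m)}$.
\end{itemize}

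Your route is shorter and treats both ranges of $m$ uniformly. The paper's shifting argument is more elementary, and it is the template the paper reuses for the separator--key pairs in the next proposition.

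One small point: the monotonicity of $\binom{m}{k'(m)}$ in your last sentence is not needed. The inequality $n\le\binom{|\cF|}{k'(|\cF|)}$ already places $|\cF|$ in the set whose minimum is taken.
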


We also consider the analogous restriction for separating set systems. Observe that one can rephrase the definition of completely separating set systems the following way. For any element $v\in V$, there are some sets in $\cF$ such that knowing which ones of those sets contain $v$ completely determines $v$.

We say that a set system $\cF\subset 2^{V}$ is \textit{$k$-hyperseparating} if for any element $v\in V$, there are sets $A_1,\dots, A_k\in \cF$ such that $v\in A_1,\dots, A_i$, $v\not\in A_{i+1},\dots, A_k$, and no other element of the underlying set satisfies this property.

Looking at the search theory motivation, we can assume that we want to determine the defective element, and then we want to provide at most $ k$ sets as witnesses. This can happen if witnesses are important but have a high cost.
Let $f(n,k)$ denote the smallest size of a $k$-hyperseparating set system on an $n$-element underlying set.

\begin{proposition}\label{trivi}
    If $n> \binom{2k-1}{k}$, then we have $\min \{m:2^{k}\binom{m}{k}\ge n\}\le f(n,k)\le \min \{m:\binom{m}{k}\ge n\}$.
\end{proposition}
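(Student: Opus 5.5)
For the lower bound I will use a counting (witness-encoding) argument. Let $\cF$ be a $k$-hyperseparating family of size $m$ on $V$ with $|V|=n$. To each $v\in V$ assign a witness: a set $S_v\subset\cF$ of at most $k$ sets together with the pattern $P_v\subset S_v$, where $P_v$ is the family of sets in $S_v$ containing $v$. By definition, $v$ is the unique element whose membership pattern on $S_v$ equals $P_v$. Hence the map $v\mapsto (S_v,P_v)$ is injective: if $(S_v,P_v)=(S_{v'},P_{v'})$, then $v'$ has the same pattern as $v$ on $S_v$, which forces $v=v'$.

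It remains to count the possible pairs $(S,P)$. Since repeats are allowed in the definition, $|S_v|\le k$. I will pad $S_v$ to exactly $k$ distinct sets, which is possible whenever $m\ge k$, and let $P_v$ record membership on the padded set; uniqueness is preserved because adding sets only refines the pattern. The number of pairs is then at most $2^k\binom{m}{k}$. This gives $2^k\binom{m}{k}\ge n$, so $f(n,k)\ge\min\{m:2^k\binom mk\ge n\}$. The case $m<k$ needs a separate check. There the family is separating, so $2^m\ge n$. One also has $2^m\le 2^k\binom{m}{k'}$ for $m<k$, interpreting the bound via the minimum over the relevant $m$. More simply, the assumption $n>\binom{2k-1}{k}$ guarantees that the minimizer $m_0$ of the left-hand side is at least $k$, so only the case $m\ge k$ matters. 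I expect this bookkeeping about small $m$ to be the only slightly fiddly point.

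For the upper bound I will invoke Proposition~\ref{prop1}. A $k$-hypercompletely separating family is $k$-hyperseparating: if $A_1\cap\dots\cap A_k=\{v\}$, then $v$ is the unique element lying in all of $A_1,\dots,A_k$, so taking $i=k$ gives the witness. By Proposition~\ref{prop1}, $f(n,k)\le\min\{m:\binom{m}{k'}\ge n\}$. Now $n>\binom{2k-1}{k}=\binom{2k-1}{k-1}$, and for $m\le 2k-1$ we have $\binom{m}{\lfloor m/2\rfloor}\le\binom{2k-1}{k-1}<n$. So any $m$ with $\binom{m}{k'}\ge n$ satisfies $m\ge 2k$, which gives $k'=k$. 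Conversely, the least $m$ with $\binom mk\ge n$ is also at least $2k$. Therefore the two minima coincide and equal $\min\{m:\binom mk\ge n\}$.

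Concretely, the construction takes $V$ to be $n$ distinct $k$-subsets of $[m]$ and $\cF=\{F_j: j\in[m]\}$ with $F_j=\{v: j\in v\}$. Then $\bigcap_{j\in v}F_j=\{v\}$, so $\cF$ is $k$-hypercompletely separating, and hence $k$-hyperseparating, with $m$ sets.
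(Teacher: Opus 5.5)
Your upper bound follows the paper (via Proposition~\ref{prop1}), and your reconciliation of $k'$ with $k$ and the explicit dual construction are correct. For the lower bound you take a genuinely different and more elementary route.

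\textbf{How the two lower-bound arguments differ.} The paper passes to the dual and records separator/key pairs $(S,S')$. It proves that the separators sharing a key form a Sperner family. It then shifts: it replaces all separators of the smallest size $\ell<k$ by their $(\ell+1)$-element supersets, which pushes everything up to size $k$ before counting. That shifting step needs $\ell+1<m-\ell$, i.e.\ $m\ge 2k$.

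Your padding observation does the same normalization in one line: enlarging a witness only refines the membership pattern, so uniqueness survives. The injection $v\mapsto(S_v,P_v)$ into the $2^k\binom mk$ pairs (a $k$-subset of $\cF$ together with a subset of it) then finishes the argument. It needs only $m\ge k$, so it covers $k\le m<2k$ uniformly. This is a real advantage: the threshold on $n$ does not by itself force $m\ge 2k$. For example, the $m=3$ construction in the proof of Theorem~\ref{tetel} gives a $2$-hyperseparating family of $3$ sets on $6>\binom32$ elements.

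\textbf{The gap: the case $m<k$.} Your argument here does not work as written. The fact that $m_0=\min\{m:2^k\binom mk\ge n\}\ge k$ holds for every $n\ge1$, since $\binom mk=0$ for $m<k$. It is exactly the reason a family with $m<k$ sets would \emph{violate} the claimed lower bound, so it cannot be why this case ``doesn't matter''. Your other attempt compares $2^m$ with $2^k\binom{m}{k'}$, which is not the quantity in the statement.

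What you must show is that $m<k$ is impossible, and you already have the ingredient: such a family is separating. This gives
\[
n\le 2^m\le 2^{k-1}\le \prod_{i=1}^{k-1}\frac{k+i}{i}=\binom{2k-1}{k-1}=\binom{2k-1}{k}<n ,
\]
a contradiction. This is the only place your lower bound uses the hypothesis $n>\binom{2k-1}{k}$, and some such hypothesis is necessary. For $k=3$, $n=4$, two sets forming a separating family are $3$-hyperseparating, whereas $\min\{m:8\binom m3\ge 4\}=3$. With this line added, your proof is complete.
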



We conjecture that the upper bound is sharp for large $n$, i.e., there is no smaller construction than the construction for $k$-hypercompletely separating sets systems.

\begin{conjecture} For any $k$, if $n$ is sufficiently large, then
    we have $f(n,k)= \min \{m:\binom{m}{k}\ge n\}$.
\end{conjecture}

We can prove this conjecture for $k=2$.

\begin{thm}\label{tetel}
    We have \begin{displaymath}
f(n,2)=
\left\{ \begin{array}{l l}
 \min \{m:\binom{m}{2}\ge n\} & \textrm{if\/ $n\ge 10$},\\
\lceil n/2\rceil & \textrm{if\/ $n\le 10$}.\\
\end{array}
\right.
\end{displaymath}
\end{thm}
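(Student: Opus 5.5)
We prove the upper bounds by explicit constructions and the lower bounds by a charging argument on witness pairs. It helps to note first that the two formulas meet at the threshold: for $n=9,10$ both give $5$. For $n\ge 11$ the first formula is smaller. Hence it suffices to show
\[
f(n,2)\le \min\Bigl\{m:\min\bigl\{\tbinom{m}{2},\,2m\bigr\}\ge n\Bigr\}\ \text{(suitably interpreted)}\quad\text{and}\quad n\le \max\Bigl\{\tbinom{m}{2},\,2m\Bigr\}
\]
for every $2$-hyperseparating family of $m$ sets.

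\textbf{Upper bounds.} For $n\ge 10$ we use the bound of Proposition~\ref{trivi}. Concretely, assign distinct pairs $\{i,j\}\subset[m]$ to the elements and put $V_i=\{v: i\in\text{pair}(v)\}$. Then $V_i\cap V_j=\{v\}$, so $v$ is witnessed by $V_i,V_j$ with the pattern "in both". This is the $k=2$ case of the construction behind Proposition~\ref{prop1}. For $n\le 10$ we need $\lceil n/2\rceil$ sets, i.e.\ a family of $m$ sets handling $2m$ elements. Here each element should use a pattern other than "in both", and we will tune a small cyclic construction on sets $F_1,\dots,F_m$. The first attempt is as follows:
\begin{itemize}[label={}]
\item $w_i\in F_i\cap F_{i+1}$ only, witnessed by $F_i\cap F_{i+1}=\{w_i\}$;
\item one element per $i$ witnessed by a pattern such as "in $F_i$, not in $F_j$" or "in neither $F_i$ nor $F_j$", with $j$ chosen far from $i$.
\end{itemize}
Since $m\le 5$ here, the finitely many cases $n\le 10$ can be checked directly, if necessary by hand-adjusting the construction for each $m\in\{1,\dots,5\}$.

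\textbf{Lower bound.} Fix a $2$-hyperseparating family $\cF$ with $|\cF|=m$ and assign to each $v$ a witness pair $\{A,B\}$ together with its pattern. A single pair can witness up to four elements, one for each of the patterns "both", "only $A$", "only $B$", "neither". This gives only the weak bound $4\binom{m}{2}$ of Proposition~\ref{trivi}. The key step is to show that the total number of elements is at most $\binom{m}{2}+(\text{something linear in } m)$, and more precisely at most $\max\{\binom{m}{2},2m\}$. The approach is a charging argument. If $v$ is witnessed by $\{A,B\}$ with pattern "only $A$", then $A\setminus B=\{v\}$. This is a very strong condition: $A$ then almost lies inside $B$. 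In particular, for a fixed $A$ the sets $B$ with $|A\setminus B|=1$ are heavily constrained. Similarly, the "neither" pattern means $V\setminus(A\cup B)=\{v\}$, and the complements behave like the "both" case. We will show the following:
\begin{itemize}[label={}]
\item each set $A$ can be the "$A$-side" of at most about one useful non-"both" witness without destroying many "both"-witnessing pairs;
\item the "neither" witnesses can be handled by passing to complements.
\end{itemize}
We then charge each element to a pair of sets injectively, except for $O(m)$ exceptional elements. Finally we compare $\binom{m}{2}$ with $2m$, which switch dominance at $m=5$.

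The main obstacle is making this injective charging exact rather than asymptotic. The lower bound must hold for every $n$ and every $m$, including the small cases $m\le 5$ where $2m$ beats $\binom{m}{2}$. My first attempt will be to build an auxiliary graph on $\cF$: join $A,B$ whenever they witness some element. I will then show that each pair carrying two or more witnessed elements forces several other pairs to carry none, and use this to bound $n$ by the number of edges plus a linear correction. A finite case check will cover $m\le 5$.
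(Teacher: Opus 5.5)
Your construction for $n\ge 10$ is fine, but the lower bound $n\le\max\{\binom{m}{2},2m\}$, which is the real content of the theorem, is not proved. The charging map is never defined, and the claims it rests on are unproved. As you concede, a charging that is injective up to $O(m)$ exceptions only gives $n\le\binom{m}{2}+O(m)$. That is useless here, because for $m\ge 6$ you need $n\le\binom{m}{2}$ exactly. Your claim that an overloaded pair forces other pairs to be empty also has the wrong shape in general. If $A\cap B=\{u\}$ and $V\setminus(A\cup B)=\{v\}$, the pairs $\{A,C\}$ and $\{B,C\}$ can still witness elements; $A$ and $B$ are complementary off $\{u,v\}$ and act jointly as a single set.

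The missing idea is to absorb the excess one set at a time. Let $g(m)$ be the largest $n$ achievable with $m$ sets, and prove $g(m)\le\max\{\binom{m}{2},g(m-1)+2\}$. Since $g(1)=2$ and $\binom{m}{2}-\binom{m-1}{2}=m-1\ge 2$ for $m\ge 3$, induction then gives $g(m)\le\max\{\binom{m}{2},2m\}$. The tool is that replacing a member by its complement preserves $2$-hyperseparation; you use this only for ``neither''.

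So if some pair $\{A,B\}$ witnesses two elements $u,v$, you may assume their patterns are ``both''/``only $A$'' or ``both''/``neither''.
\begin{itemize}
\item In the first case $A=\{u,v\}$. Delete $A,u,v$. Every remaining element avoids $A$, so any witness $\{A,C\}$ can be replaced by $C$ alone.
\item In the second case every other element lies in exactly one of $A,B$. Delete $u,v$ and replace $A,B$ by the single set $A\setminus B$; every witness using $A$ or $B$ transfers to it.
\end{itemize}
A single-set witness (after complementing, $A=\{w\}$) is removed the same way at cost $1$. If none of these occurs, each pair witnesses at most one element and $n\le\binom{m}{2}$.

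The small cases are also left open, and your cyclic scheme fails exactly where it matters. Take $m=4$ with $F_i\cap F_{i+1}=\{w_i\}$ for all $i$ (cyclically). Then every other element lies in no two consecutive $F_i$, so its membership set is $\emptyset$, a singleton, $\{1,3\}$ or $\{2,4\}$. Checking the possible witnesses shows that at most two of these can be present simultaneously, so you get $6$ elements, not $8$.

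You need an explicit example. One works: eight elements with membership sets $\emptyset,\{1\},\{2\},\{1,3\},\{2,4\},\{1,3,4\},\{2,3,4\},\{1,2,3,4\}$, witnessed respectively by $\{F_1,F_2\},\{F_1,F_3\},\{F_2,F_4\},\{F_3,F_4\},\{F_3,F_4\},\{F_2,F_4\},\{F_1,F_3\},\{F_1,F_2\}$. The case $m=5$ is just the pair construction, and $m\le 3$ is easy. Separately, your displayed sufficient condition should have $\max$, not $\min$, inside the upper bound.
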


\section{Proofs}

Given a set system $\cF$ on $V$ with $|\cF|=m$, the \textit{dual (multi)set system} $\cF'$ is defined on the underlying set $\cF$. Each $v\in V$ defines a set $F_v:=\{F\in \cF: v\in F\}$ in $\cF'$, thus $\cF'$ consists of $n$ sets on an $m$-element underlying set. Observe that the dual of the dual system is isomorphic to the original set system.

A standard method in this area is to study what properties of $\cF'$ are implied by the properties of $\cF$. For example, $\cF$ is separating if and only if for any two vertices, there is a set that contains exactly one of them. This is equivalent to the property that in $\cF'$, for any two sets, there is an element that is contained in exactly one of them. This means that the sets are distinct, i.e., there are no multisets. In particular, there are at most $2^m$ sets in $\cF'$, i.e., $n\le 2^m$, hence $m\ge \lceil \log n\rceil$. Since the $2^m$ upper bound is sharp, the $m\ge \lceil \log n\rceil$ lower bound is also sharp. In general, determining the minimum size of set systems satisfying some properties is equivalent to determining the maximum size of the dual set systems.

Similarly, $\cF$ is completely separating if and only if in $\cF'$, for every two sets $A,B$ there are elements in $A\setminus B$ and in $B\setminus A$. In other words, no set in $\cF'$ contains another set in $\cF'$. Set systems with this property are called \textit{Sperner} or \textit{antichain}, and their maximum size was determined by Sperner \cite{S}.

Similarly, $\cF$ is $k$-hypercompletely separating if and only if for every set $F\in\cF'$ there is a set $S$ of size at most $k$ that is a subset of $F$, but not a subset of any other member of $\cF'$. Moreover, $\cF$ is $k$-hyperseparating if and only if for every set $F\in\cF'$ there is a set $S$ of size at most $k$ such that $F\cap S\neq S\cap F'$ for any $F'\in \cF'$ with $F'\neq F$. In both cases, the set $S$ is called a \textit{separator}. Let $S'=S'(F)=S\cap F$. We call $S'$ the \textit{key} of $F$. 

Let us start with the proof of Proposition \ref{prop1}. It states that the smallest size of a $k$-hypercompletely separating set system on an $n$-element underlying set is $\min \{m:\binom{m}{k'}\ge n\}$. 

\begin{proof}[Proof of Proposition \ref{prop1}]
    Consider the dual set system $\cF'$ of a $k$-hypercompletely separating set system $\cF$. Our goal is now to determine the largest size of such set systems.

    If $F\in \cF'$ has size larger than $k$, then we can replace $F$ by $S$. Clearly, $S$ was not in $\cF'$, since $F$ is the only member of $\cF'$ that contains $S$. Therefore, the size of $\cF'$ does not change by this replacement, and for the new set $S$, $S$ will be a separator. Therefore, we can assume that each set in $\cF'$ has size at most $k$. Also, $\cF'$ must be a Sperner family, since a subset of $F$ does not have any unique subgraph $S$. If $m\le 2k-1$, then Sperner's theorem \cite{S} shows that $|\cF'|\le \binom{m}{k'}$. If $m\ge 2k-1$, then it is well-known that a Sperner family consisting of sets of size at most $k$ has at most $\binom{m}{k}$ sets. 
    
    This can be seen, for example, by the following argument. If there is a set of size less than $k$ in $\cF'$, then we consider the subfamily $\cG$ of $\cF'$ consisting of the sets of the smallest size $\ell$. Let $\cG'$ consist of all the $\ell+1$-element subsets of the underlying set that contain at least one member of $\cG$. Each set in $\cG$ is contained in $m-\ell$ sets of $\cG'$, and each set in $\cG'$ contains at most $\ell+1<n-\ell$ sets of $\cG$, thus $|\cG'|>|\cG|$. We delete $\cG$ and add $\cG'$ to $\cF'$ in order to obtain a family $\cF''$. Each set in $\cF''$ has size at most $k$, and $\cF''$ is a Sperner family. Indeed, the new sets of $\cF''$ do not contain any other set of $\cF''$, since they have the smallest size in $\cF''$. If $G\in \cG'$ is contained in $F\in \cF''$, then $|F|>|G|$, thus $F\not\in \cG'$. This implies that $F\in \cF'$. But $F$ contains a set in $\cG'$, which contains a set in $\cG$, thus $\cF'$ is not a Sperner family, a contradiction. Therefore, $\cG''$ is indeed a Sperner family consisting of sets of size at most $k$, and is larger than $\cF$, a contradiction. 

We obtained that $n=|\cF'|\le \binom{m}{k'}$. This bound is sharp, as shown by the set system consisting of all the $k'$-element sets. To obtain the bound on $|\cF|$, observe that we showed that $\binom{m}{k'}\ge n$. We can pick the smallest $m$ satisfying this inequality, take the $k'$-element sets on an $m$-element underlying set, and take its dual to obtain a $k$-hyperseparating family of the desired size.
    \end{proof}

Let us continue with the proof of Proposition \ref{trivi} that states that $\min \{m:2^{k}\binom{m}{k}\ge n\}\le f(n,k)\le \min \{m:\binom{m}{k}\ge n\}$.

\begin{proof}[Proof of Proposition \ref{trivi}]
    Consider a $k$-hyperseparating set system $\cF$ on an $n$-element set $V$, and its dual $\cF'$. 
Then $\cF'$ has the property that for any set $F\in \cF'$, there is a set $S$ of size at most $k$ such that $F\cap S\neq S\cap F'$ for any $F'\in \cF'$ with $F'\neq F$.

The upper bound follows from Proposition \ref{prop1}. For the lower bound, observe first that a set of size $i$ can be the separator of at most $2^{i}$ sets. This shows that $|\cF'|\le \sum_{i=0}^k 2^{i}\binom{n}{i}$. 

Now, we are given a family $\cH$ of pairs $(S,S')$ of separators and keys, where $|S|\le k$ and $S'\subset S$. Furthermore, we claim that for any pairs with the same key, i.e., $(S_1,S')$ and $(S_2,S')$, we have that $S_1$ is not a subset of $S_2$. Indeed, otherwise let $(S_i,S')$ belong to the set $\cF_i\in \cF'$, i.e., $F_i\cap S_i=S'$ and $F_i$ is the only member of $\cF'$ with this property. Then $S'\subseteq F_1\cup S_1\subseteq F_2\cap S_1\subseteq F_2\cap S_2=S'$. This shows that $F_1$ is not the only set in $\cF'$ that intersects $S_1$ in $S'$, a contradiction.

Consider now families $\cH$ of pairs $(S,S')$ on an $m$-element underlying set, where $|S|\le k$ and $S'\subset S$, such that for each $S'$, the the sets $S$ with $(S,S')\in \cH$ form a Sperner family. We claim that such families consist of at most $2^k\binom{m}{k}$ pairs. To emphasize that we are dealing with abstract families satisfying the above conditions, instead of separator and key, we say that $S$ is the \textit{large set} of the pair and $S'$ is the \textit{small set}.

Assume that $\cH$ has the maximum number of pairs. If all the large sets form a Sperner family, then there are at most $\binom{n}{k}$ large sets by an argument inside the proof of Proposition \ref{prop1}, which implies the desired upper bound $2^k\binom{n}{k}$. This is not the property we have here, but we can use a similar argument. Note that the threshold on $n$ means that $m\ge 2k$.

 If there is a large set of size less than $k$, then we consider the subfamily $\cG$ of $\cH$ consisting of the large sets of the smallest size $\ell$, together with the corresponding small sets. Let $\cG'$ consist of all the $\ell+1$-element subsets of the underlying set that contain at least one member of $\cG$. Each set in $\cG$ is contained in $m-\ell$ sets of $\cG'$, and each set in $\cG'$ contains at most $\ell+1<m-\ell$ sets of $\cG$, thus $|\cG'|>|\cG|$. For each set in $\cG'$, we add each $S'$ as a small set if $S'$ is the small set of a member of $\cG$. Then $\cG'$ is disjoint from $\cH$ and $\cG'$ contains more pairs than $\cG$. We delete $\cG$ and add $\cG'$ to $\cH$ in order to obtain a family $\cH'$. 
 
 Each set in $\cH'$ has size at most $k$. Indeed, the new sets in $\cH'$ do not contain any other set of $\cH'$, since they have the smallest size in $\cH'$. 
 We have to show that for each small set $S'$, the corresponding large sets form a Sperner family. Indeed, a new large set $S$ cannot be the subset of a new large set, since they have the same size, and cannot be the subset of an old large set $S_1$ where $S_1,S'$ is in $\cH$. $S$ also cannot contain any member of $\cH'$, since $S$ has the smallest size in $\cH'$.

We obtained that $\cH'$ satisfies our assumptions on the family, thus $\cH$ did not contain the largest number of pairs, a contradiction, completing the proof.
\end{proof}

Let us continue with the proof of Theorem \ref{tetel}, which states that $f(n,2)$ is the smaller of $\min \{m:\binom{m}{2}\ge n\}$ and $n/2$. 

\begin{proof}[Proof of Theorem \ref{tetel}] Consider a $2$-hyperseparating set system $\cF$ on an $n$-element set $V$, and its dual $\cF'$. 
Then $\cF'$ has the property that for any set $F\in \cF'$, there is a set $S$ of size at most 2 such that $F\cap S\neq S\cap F'$ for any $F'\in \cF'$ with $F'\neq F$. We call such families \textit{nice}.
Our goal is to determine the largest size of nice families on an $m$-element underlying set, which we denote by $g(m)$. We need to show that $g(m)$ is at most the larger of $\binom{m}{2}$ and $2m$.

Given a family $\cF'$ and an element $v$, we say that we \textit{switch} $v$ if we remove it from the sets in $\cF'$ that contain $v$ and add it to those that do not contain $v$. In other words, we obtain the family
$\{F\cup\{v\}: v\not\in F,\, F\in \cF'\}\cup \{F\setminus \{v\}: v\in F\in\cF'\}$. 
Observe that for a nice family $\cF'$, we can switch any element to obtain another nice family. Indeed, the same $S$ will be a separator.
 
Let us consider two vertices $x,y$ with the property that $\{x,y\}$ is the separator for at least two sets $F,F'\in\cF'$. There are two cases, either $S'(F)$ and $S'(F')$ differ by one element or by two elements. By switching elements, we can assume that either $S'(F)=\{x,y\}$ and $S'(F')=\{x\}$, or $S'(F)=\{x,y\}$ and $S'(F')=\emptyset$.

In the first case, among the sets that contain $x$, only one contains $y$, and only one does not contain $y$. Let us delete $x,F,F'$. Then the resulting set system $\cG$ on $m-1$ vertices has the property that for any set, we have a separator of size 2. Note that the separator here could contain $x$, hence it is not obvious that $\cG$ is a nice family on $m-1$ vertices. However, in this particular case, it is easy to see that $x$ does not actually separate sets in $\cG$, i.e., we could delete $x$ from any separator to obtain another separator in $\cG$. This shows that $\cG$ is nice, thus $\cF'$ has size at most $g(m-1)+2$.


In the second case, we delete $x,y,F,F'$ and add a new element $z$. We add $z$ to each set that contains $x$ but not $y$. Let $\cG$ denote the resulting set system. We show that $\cG$ is nice. Consider $G\in \cG$. By switching, we can assume that $G$ does not contain $z$, i.e., $G\in \cF'$, $x\not\in G$ and $y\in G$. Consider the separator $S(G)$ in $\cF'$ (note that $G$ may have a different separator in $\cG$). If $x,y\not\in S(G)$, then $S(G)$ is still a separator in $\cG$. 

If $S(G)=\{x,w\}$ for some $w\neq y$, that means that $G$ is the only set in $\cF'$ that does not contain $x$ and contains $w$ if and only if $w$ is in the key $S'(G)$. Then $G$ is the only set in $\cG$ that does not contain $z$ and contains $w$ if and only if $v$ is in the key $S'(G)$. Indeed, if $G'\in \cG$ also does not contain $z$ and contains $w$ if and only if $w$ is in the key $S'(G)$, then $G'$ does not contain $x$, hence in $\cF'$ intersects $S(G)$ in $S'(G)$, a contradiction. 

If $S(G)=\{y,w\}$ for some $w\neq x$, that means that $G$ is the only set in $\cF'$ that contains $y$ and contains $w$ if and only if $w$ is in the key $S'(G)$. Then $G$ is the only set in $\cG$ that does not contain $z$ and contains $w$ if and only if $v$ is in the key $S'(G)$. Indeed, if $G'\in \cG$ also does not contain $z$ and contains $w$ if and only if $w$ is in the key $S'(G)$, then $G'$ does not contain $x$, hence in $\cF'$ intersects $S(G)$ in $S'(G)$, a contradiction. 

Finally, if $S(G)=\{x,y\}$, that means that $G$ is the only set in $\cF'$ that does not contain $x$ and contains $y$, but then we are in the first case.

We have obtained that if a 2-set is the separator for more than one member of $\cF$, then $g(m)\le g(m-1)+2$. Otherwise, each 2-set is a separator at most once, and each 1-set is a separator at most twice. Assume that a 1-set is a separator $S$. Let $S=\{x\}$, and by switching we can assume that $S'=S$, i.e., exactly one set $F\in \cF'$ contains $x$. Then we delete $F$ and $x$, and in the rest we obtain a nice set system on $m-1$ vertices, thus $\cF'$ contains at most $g(m-1)+1$ sets.

We have obtained that either $|\cF'|\le \binom{m}{2}$, or $|\cF'|\le g(m-1)+2$. 
Clearly, $g(1)\le 2$ and $g(2)\le 4$, thus $g(3)\le \max\{6,\binom{3}{2}\}=6$, $g(4)\le \max\{8,\binom{4}{2}\}=8$ and $g(5)\le \max\{10,\binom{5}{2}\}=10$. This shows that $g(5)=10$ and for $m> 5$, we can see by induction that $g(m)\le\binom{m}{2}$. Indeed, either we do have this bound immediately, or we have that $g(m)\le g(m-1)+2\le\binom{m-1}{2}+2<\binom{m}{2}$. 

Let us show some constructions for $m\le 3$.
For $m=1$, the two sets are $\emptyset$ and the full set $\{v\}$, and for both of these sets $F$ we have $S=S'=F$. For $m=2$, we have all four sets in $\cF$, and for each set $F$ we have $S=V$, $S'=F$. For $m=3$, we take all the 1-element and 2-element sets. If $|F|=2$, then $S=S'=F$, while if $|F|=1$, then $S'=\emptyset$ and $S$ is the complement of $F$.

Finally, consider the case $m=4$. A simple construction of 8 sets was found by Claude Opus 4.6. The sets in the underlying set $\{1,2,3,4\}$ are $\emptyset$ with witness $\{1,2\}$, $\{1\}$ with witness $\{1,3\}$, $\{2\}$ with witness $\{2,4\}$, $\{1,3\}$ with witness $\{3,4\}$, $\{2,4\}$ with witness $\{3,4\}$, $\{1,3,4\}$ with witness $\{2,4\}$, $\{2,3,4\}$ with witness $\{1,3\}$, and $\{1,2,3,4\}$ with witness $\{1,2\}$.
\end{proof}


\bigskip

\textbf{Declaration of AI usage. }
The construction for $m=4$ in the proof of Theorem \ref{tetel} was found using a script written by Claude Opus 4.6, and was subsequently checked by the author.

\bigskip

\textbf{Funding.} Research supported by the National Research, Development and Innovation Office - NKFIH under the grant KKP-133819 and by the J\'anos Bolyai scholarship.

\end{document}